\newcommand{\longversion}[2]{#1} % LONG PAPER
\begin{document}
%
% paper title
% can use linebreaks \\ within to get better formatting as desired
\title{\huge Stein COnsistent Risk Estimator (SCORE) for hard thresholding}

% author names and affiliations
% use a multiple column layout for up to three different
% affiliations
\author{\IEEEauthorblockN{Charles-Alban Deledalle}
\IEEEauthorblockA{Institut de Mathématiques de Bordeaux\\
CNRS-Université Bordeaux 1\\
Bordeaux, France\\
Email: cdeledal@math.u-bordeaux1.fr}
\and
\IEEEauthorblockN{Gabriel Peyré}
\IEEEauthorblockA{CEREMADE\\
CNRS-Université Paris Dauphine\\
Paris, France\\
Email: gabriel.peyre@ceremade.dauphine.fr}
\and
\IEEEauthorblockN{Jalal Fadili}
\IEEEauthorblockA{GREYC\\
CNRS-ENSICAEN\\
Caen, France\\
Email: jalal.fadili@greyc.ensicaen.fr}}

% conference papers do not typically use \thanks and this command
% is locked out in conference mode. If really needed, such as for
% the acknowledgment of grants, issue a \IEEEoverridecommandlockouts
% after \documentclass

% for over three affiliations, or if they all won't fit within the width
% of the page, use this alternative format:
% 
%\author{\IEEEauthorblockN{Michael Shell\IEEEauthorrefmark{1},
%Homer Simpson\IEEEauthorrefmark{2},
%James Kirk\IEEEauthorrefmark{3}, 
%Montgomery Scott\IEEEauthorrefmark{3} and
%Eldon Tyrell\IEEEauthorrefmark{4}}
%\IEEEauthorblockA{\IEEEauthorrefmark{1}School of Electrical and Computer Engineering\\
%Georgia Institute of Technology,
%Atlanta, Georgia 30332--0250\\ Email: see http://www.michaelshell.org/contact.html}
%\IEEEauthorblockA{\IEEEauthorrefmark{2}Twentieth Century Fox, Springfield, USA\\
%Email: homer@thesimpsons.com}
%\IEEEauthorblockA{\IEEEauthorrefmark{3}Starfleet Academy, San Francisco, California 96678-2391\\
%Telephone: (800) 555--1212, Fax: (888) 555--1212}
%\IEEEauthorblockA{\IEEEauthorrefmark{4}Tyrell Inc., 123 Replicant Street, Los Angeles, California 90210--4321}}

% use for special paper notices
%\IEEEspecialpapernotice{(Invited Paper)}

% make the title area
\maketitle

\begin{abstract}
%\boldmath
  In this work, we construct a risk estimator for
  hard thresholding which can be used as a basis
  to solve the difficult task of automatically selecting the threshold. 
  As hard thresholding is not even continuous,
  Stein's lemma cannot be used to get an unbiased estimator of degrees of freedom, hence of the risk.
  %Particular attention has to be paid when deriving biased estimators.
  We prove that under a mild condition, our estimator of the degrees of freedom, although biased, is consistent. 
  Numerical evidence shows that our estimator outperforms another biased risk estimator proposed in \cite{jansen2011icv}.
\end{abstract}
\vspace{-0.1cm}
% IEEEtran.cls defaults to using nonbold math in the Abstract.
% This preserves the distinction between vectors and scalars. However,
% if the conference you are submitting to favors bold math in the abstract,
% then you can use LaTeX's standard command \boldmath at the very start
% of the abstract to achieve this. Many IEEE journals/conferences frown on
% math in the abstract anyway.

% no keywords

% For peer review papers, you can put extra information on the cover
% page as needed:
% \ifCLASSOPTIONpeerreview
% \begin{center} \bfseries EDICS Category: 3-BBND \end{center}
% \fi
%
% For peerreview papers, this IEEEtran command inserts a page break and
% creates the second title. It will be ignored for other modes.
\IEEEpeerreviewmaketitle

\section{Introduction}
We observe a realisation $y \in \RR^P$ of the normal random vector $Y = x_0 + W$, $W \sim \Nn(x_0, \sigma^2\Id_P)$.
Given an estimator $y \mapsto x(y, \lambda)$ of $x_0$ evaluated at $y$ and parameterized by $\lambda$,
the associated Degree Of Freedom (DOF)
is defined as \cite{efron1986biased}
\begin{align}
  &\DOF{x}(x_0, \lambda)
  \triangleq
  \sum_{i=1}^P \frac{\cov(Y_i, x(Y_i, \lambda))}{\sigma^2}~.
\end{align}
The DOF plays an important role in model/parameter selection.
%\begin{align}
%  \HT(y) =
%  \uargmin{x}
%  \tfrac{1}{2} \norm{y - x}^2
%  +
%  \lambda \norm{x}_0~.
%\end{align}
%where $\norm{\cdot}_0$ is the $\ell^0$ pseudo-norm
%that corresponds to the number of non null elements of
%its argument.
For instance, define the criterion %(Stein Unbiased Risk Estimator)
\begin{align}\label{eq:erisk}
\small{
  \norm{Y - x(Y, \lambda))}^2
  \!-\! P \sigma^2
  \!+\! 2 \sigma^2 \EDOF{x}(Y, \lambda) ~.
}
\end{align}
\longversion{In the rest, we denote $\diverg$ the divergence operator.}{}
If $x(\cdot, \lambda)$ is weakly differentiable w.r.t. its first argument with an essentially bounded gradient, Stein's lemma 
\cite{stein1981estimation} implies that $\EDOF{x}(Y, \lambda)=\diverg\pa{x(Y, \lambda)}$ and \eqref{eq:erisk} (the SURE in this case) are 
respectively unbiased estimates of $\DOF{x}(x_0, \lambda)$ and of the risk $\EE_W \norm{x(Y, \lambda) - x_0}^2$.
In practice, \eqref{eq:erisk} relies solely on the realisation $y$ which
is useful for selecting $\lambda$ minimizing \eqref{eq:erisk}.

In this paper, we focus on Hard Thresholding (HT)
\begin{align}\label{eq:ht}
  y \mapsto \HT(y, \lambda)_i =
  \choice{
    0 & \text{if } |y_i| < \lambda~,\\
    y_i & \text{otherwise} ~.
  }
\end{align}
HT is is not even continuous, and the Stein's lemma does not apply, so that $\DOF{x}(x_0,\lambda)$ and the risk cannot be unbiasedly estimated \cite{jansen2011icv}.
To overcome this difficulty, we build an estimator that, although biased, turns out to enjoy good asymptotic properties. 
%Our contribution is to introduce a biased but consistent risk estimator
In turn, this allows efficient selection of the threshold $\lambda$.

\section{Stein COnsistent Risk Estimator (SCORE)}

\longversion{
Remark that the HT can be written as
\begin{align*}
  \HT(y, \lambda) &=
  \ST(y, \lambda)
  +
  D(y, \lambda)
  \\
  \text{where} \quad
  \ST(y, \lambda)_i &=
  \choice{
    y_i + \lambda & \text{if} \quad y_i < -\lambda\\
    0 &\text{if} \quad -\lambda \leq y_i < +\lambda\\
    y_i - \lambda & \text{otherwise}\\
  }\\
  \text{and} \quad
  D(y, \lambda)_i &=
  \choice{
    - \lambda & \text{if} \quad y_i < -\lambda\\
    0 &\text{if} \quad -\lambda \leq y_i < +\lambda\\
    + \lambda & \text{otherwise}\\
  },
\end{align*}
where $y \mapsto \ST(y, \lambda)$ is the soft thresholding operator.
Soft thresholding is a Lipschitz continuous function of $y$ with an essentially bounded gradient, and therefore, appealing to Stein's lemma,
an unbiased estimator of its DOF is given by $\EDOF{\ST}(Y, \lambda) = \diverg \ST(Y, \lambda)$. This DOF estimate at a realization $y$ is known
to be equal to $\#\{ |y| > \lambda \}$, i.e., the number of entries
of $|y|$ greater than $\lambda$
(see \cite{donoho1995adapting,zou2007degrees}).
The mapping $y \mapsto D(y, \lambda)$ is piece-wise constant with discontinuities
at $\pm \lambda$ so that Stein's lemma does not apply to estimate the DOF of hard thresholding.
To circumvent this difficulty, we instead propose an estimator of the DOF
of a smoothed version replacing
$D(\cdot, \lambda)$ by $\Gg_h \star D(., \lambda)$ where $\Gg_h$ is a Gaussian kernel
of bandwidth $h > 0$ and $\star$ is the convolution operator.
In this case $\Gg_h \star D(., \lambda)$ is obviously $C^\infty$ whose DOF
can be unbiasedly estimated as $\diverg \pa{\Gg_h \star D(., \lambda)(Y)}$.
To reduce bias (this will be made clear from the proof), we have furthermore introduced
a multiplicative constant, $\sqrt{\sigma^2\!+\!h^2}/\sigma$,
leading to the following DOF formula
}{
We define, for $h > 0$,
the following DOF formula
}
\begin{align}\label{eq:edof}
  y \mapsto & ~ \EDOF{\HT}(y, \lambda, h)
  =
  \#\{ |y| > \lambda \} \;+
  \longversion{}{
  \nonumber\\
  &
  }%
  \; \tfrac{\lambda \sqrt{\sigma^2\!+\!h^2}}{\sqrt{2 \pi} \sigma h}
  \sum_{i = 1}^P
  \left[
    \exp\pa{\!-\tfrac{(y_i\!+\!\lambda)^2}{2 h^2}}
    \!+\!
    \exp\pa{\!-\tfrac{(y_i\!-\!\lambda)^2}{2 h^2}}
    \right]\longversion{.}{}
\end{align}
\longversion{
  We now give our two main results
  proved in Section \ref{sec:proof}.
}{
where $\#\{ |y| \!>\! \lambda \}$ is the number of entries of $|y|$ greater than $\lambda$.
}%
\begin{thm}\label{thm:score}
  Let $Y = x_0 + W$ for $W \sim \Nn(x_0, \sigma^2\Id_P)$. Take $\widehat{h}(P)$ such that $\lim_{P \to \infty} \widehat{h}(P) = 0$ and
  $\lim_{P \to \infty} P^{-1} \widehat{h}(P)^{-1} = 0$. Then $\plim_{P \to \infty}\tfrac{1}{P}\pa{\EDOF{\HT}(Y, \lambda, \widehat{h}(P))-\DOF{\HT}(x_0,\lambda)}=0$. In particular
  \begin{align*}
    1. \lim_{P \to \infty} \EE_W \big[ \tfrac{1}{P} \EDOF{\HT}(Y, \lambda, \widehat{h}(P)) \big] &=
     \lim_{P \to \infty} \tfrac{1}{P} \DOF{\HT}(x_0,\lambda), ~ \text{and} ~ \\
    2. \lim_{P \to \infty} \VV_W \big[ \tfrac{1}{P} \EDOF{\HT}(Y, \lambda, \widehat{h}(P)) \big]  &= 0 ~,
  \end{align*}
  where $\VV_W$ is the variance w.r.t. $W$.
  %, i.f.f.,
  %$\lim_{P \to \infty} \widehat{h}(P) = 0$ and
  %$\lim_{P \to \infty} P^{-1} \widehat{h}(P)^{-1} = 0$.
\end{thm}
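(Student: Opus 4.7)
The plan is to exploit the decomposition $\HT = \ST + D$ introduced earlier, with $\ST$ the Lipschitz soft-thresholding operator and $D$ the piecewise-constant jump. Because $\ST$ has an essentially bounded weak gradient, Stein's lemma yields $\DOF{\ST}(x_0,\lambda) = \EE_W[\#\{|Y|>\lambda\}]$ exactly, so the first summand of $\EDOF{\HT}$ is already unbiased for $\DOF{\ST}$ with variance of order $P$ (a sum of $P$ independent Bernoulli variables). The entire analysis therefore reduces to controlling the bias and variance of the second summand of $\EDOF{\HT}$ against $\DOF{D}(x_0,\lambda)$.

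For the bias, I would first compute $\DOF{D}(x_0,\lambda)$ in closed form: writing $D(Y_i,\lambda) = \lambda(\mathbf{1}_{Y_i>\lambda} - \mathbf{1}_{Y_i<-\lambda})$ and using $\EE[(Y_i - x_{0,i})\mathbf{1}_{Y_i>t}] = \frac{\sigma}{\sqrt{2\pi}}\exp(-(t-x_{0,i})^2/(2\sigma^2))$ gives $\DOF{D}(x_0,\lambda) = \frac{\lambda}{\sqrt{2\pi}\sigma}\sum_i[\exp(-(x_{0,i}-\lambda)^2/(2\sigma^2)) + \exp(-(x_{0,i}+\lambda)^2/(2\sigma^2))]$. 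Next I would compute $\EE_W$ of the second summand of $\EDOF{\HT}$ via the Gaussian convolution identity $\EE[\exp(-(Y_i\pm\lambda)^2/(2h^2))] = \frac{h}{\sqrt{\sigma^2+h^2}}\exp(-(x_{0,i}\pm\lambda)^2/(2(\sigma^2+h^2)))$. The multiplicative constant $\sqrt{\sigma^2+h^2}/(\sigma h)$ in \eqref{eq:edof} is precisely calibrated to cancel the $h/\sqrt{\sigma^2+h^2}$ factor, so the expectation reproduces $\DOF{D}(x_0,\lambda)$ with $\sigma^2$ replaced by $\sigma^2+h^2$ in the Gaussian exponents. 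The per-coordinate bias is then $|\exp(-a/(2(\sigma^2+h^2))) - \exp(-a/(2\sigma^2))|$ with $a\geq 0$, which the mean value theorem combined with the boundedness of $a \mapsto a\exp(-a/(2c))$ controls uniformly by $O(h^2)$. Summing and normalizing by $P$ gives $|\frac{1}{P}\EE_W[\EDOF{\HT}(Y,\lambda,h)] - \frac{1}{P}\DOF{\HT}(x_0,\lambda)| = O(h^2)$, which vanishes as $h\to 0$ and proves statement 1.

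For the variance, the independence of coordinates reduces the problem to a per-coordinate second-moment bound. Using $(a+b)^2 \leq 2(a^2+b^2)$ and reapplying the Gaussian convolution identity, now with effective bandwidth $h/\sqrt{2}$, yields $\EE[\exp(-(Y_i\pm\lambda)^2/h^2)] = O(h)$. Combined with the squared prefactor $(\lambda\sqrt{\sigma^2+h^2}/(\sqrt{2\pi}\sigma h))^2 = O(1/h^2)$, each per-coordinate second moment is $O(1/h)$; summing over the $P$ independent coordinates and dividing by $P^2$ yields $\VV_W[\frac{1}{P}\EDOF{\HT}] = O(1/(Ph))$, which vanishes whenever $Ph\to\infty$, giving statement 2. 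Putting the bias and variance bounds together produces a vanishing mean-squared error, and Chebyshev's inequality then delivers the claimed convergence in probability.

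The main delicate point is the calibration of the prefactor $\sqrt{\sigma^2+h^2}/(\sigma h)$. It must simultaneously (i) neutralize the $h/\sqrt{\sigma^2+h^2}$ produced by the Gaussian convolution so that the bias is $O(h^2)$ rather than $O(1)$, and (ii) not inflate the per-coordinate second moment beyond $O(1/h)$. This bias–variance tension is exactly what forces the two-sided hypothesis $\widehat{h}(P)\to 0$ together with $P\widehat{h}(P)\to\infty$: bias demands bandwidth shrinkage, while variance demands that it shrink slowly enough relative to the ambient dimension $P$.
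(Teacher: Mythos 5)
Your proposal is correct and follows essentially the same route as the paper: the Gaussian convolution identity for $\EE[\exp(-(Y_i\pm\lambda)^2/(2h^2))]$, the calibration of the prefactor $\sqrt{\sigma^2+h^2}/(\sigma h)$ against the closed-form DOF of hard thresholding, a per-coordinate $O(1/h)$ variance bound summed over independent coordinates, and Chebyshev for the convergence in probability. The only cosmetic differences are that you derive the closed-form DOF directly from the $\ST + D$ decomposition rather than quoting Jansen's formula, and you make the bias quantitatively $O(h^2)$ where the paper merely passes to the limit.
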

\longversion{
  We now turn to a straightforward corollary of this theorem.
}{
  The proof is available in the extended version of this abstract
  \cite{deledalle2013score}.
}%
\longversion{
\begin{cor}
  Let $Y = x_0 + W$ for $W \sim \Nn(x_0, \sigma^2\Id_P)$, and assume that $\norm{x_0}_4=o(P^{1/2})$. Take $\widehat{h}(P)$ such that $\lim_{P \to \infty} \widehat{h}(P) = 0$ and
  $\lim_{P \to \infty} P^{-1} \widehat{h}(P)^{-1} = 0$. Then, the Stein COnsistent Risk Estimator (SCORE) evaluated at a realization $y$ of $Y$
  \begin{align*}
    \mathrm{SCORE}\{x\}(y, \lambda, \widehat{h}(P))
    =
    \sum_{i = 1}^P \left(
    (y_i^2 - \sigma^2) + I(|y_i| > \lambda) (2\sigma^2 - y_i^2)
    \longversion{}{\\}%
    + 2 \sigma
    \tfrac{\lambda \sqrt{\sigma^2\!+\!\widehat{h}(P)^2}}{\sqrt{2 \pi} \widehat{h}(P)}
    \left[
      \exp\pa{\!-\tfrac{(y_i\!+\!\lambda)^2}{2 \widehat{h}(P)^2}}
      \!+\!
      \exp\pa{\!-\tfrac{(y_i\!-\!\lambda)^2}{2 \widehat{h}(P)^2}}
      \right]
    \right)
  \end{align*}
  is such that $\plim_{P \to \infty}\frac{1}{P}\pa{\mathrm{SCORE}\{x\}(Y, \lambda, \widehat{h}(P))-\EE_W \norm{\HT(Y, \lambda) - x_0}^2}=0$.
\end{cor}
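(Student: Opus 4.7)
My plan is to view $\mathrm{SCORE}$ as a plug-in Stein-type risk estimate and to control the error introduced by the biased plug-in via Theorem~\ref{thm:score}. The starting observation is the algebraic identity
\begin{align*}
  \mathrm{SCORE}\{x\}(y,\lambda,\widehat{h}(P)) = \norm{y-\HT(y,\lambda)}^2 - P\sigma^2 + 2\sigma^2\,\EDOF{\HT}(y,\lambda,\widehat{h}(P)),
\end{align*}
which follows from $y_i^2 - I(|y_i|>\lambda)\,y_i^2 = (y_i-\HT(y_i,\lambda))^2$ together with the fact that the exponential terms in the statement of the corollary are exactly $2\sigma^2$ times the kernel correction of $\EDOF{\HT}$ in~\eqref{eq:edof}. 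I would then introduce the oracle SURE $T(Y,\lambda) = \norm{Y-\HT(Y,\lambda)}^2 - P\sigma^2 + 2\sigma^2\,\DOF{\HT}(x_0,\lambda)$, so that $\mathrm{SCORE} - T(Y,\lambda) = 2\sigma^2\,(\EDOF{\HT}(Y,\lambda,\widehat{h}(P)) - \DOF{\HT}(x_0,\lambda))$.

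The argument then splits into three pieces. First, $T$ is unbiased for the expected risk: writing $W = Y-x_0$, expanding $\norm{Y-\HT(Y,\lambda)}^2 = \norm{W - (\HT(Y,\lambda)-x_0)}^2$, taking expectation and using the covariance definition $\DOF{\HT}(x_0,\lambda) = \sigma^{-2}\sum_i \cov(Y_i,\HT(Y_i,\lambda))$ gives $\EE_W T(Y,\lambda) = \EE_W \norm{\HT(Y,\lambda)-x_0}^2$ directly; no weak differentiability of $\HT$ is invoked. Second, $P^{-1}(\mathrm{SCORE} - T(Y,\lambda)) \to 0$ in probability is exactly the conclusion of Theorem~\ref{thm:score}. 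Third, $T - \EE_W T = \sum_i \{Y_i^2 I(|Y_i|\leq\lambda) - \EE[Y_i^2 I(|Y_i|\leq\lambda)]\}$ is a sum of $P$ independent random variables, each supported in $[-\lambda^2,\lambda^2]$ since $Y_i^2 I(|Y_i|\leq\lambda)\leq \lambda^2$ almost surely; its variance is therefore at most $P\lambda^4$, and Chebyshev's inequality yields $P^{-1}(T-\EE_W T)\to 0$ in probability. Combining the three pieces via the corresponding decomposition of $\mathrm{SCORE} - \EE_W \norm{\HT(Y,\lambda)-x_0}^2$ concludes the proof.

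The substantive difficulty has already been absorbed into Theorem~\ref{thm:score}: control of the biased DOF plug-in via Gaussian smoothing of the discontinuous part of $\HT$ is the delicate step, and the bandwidth conditions $\lim_P\widehat{h}(P) = 0$ and $\lim_P P^{-1}\widehat{h}(P)^{-1} = 0$ enter only there. The remainder is routine Stein-style bookkeeping together with a Chebyshev bound on a uniformly bounded sum. I note in passing that the hypothesis $\norm{x_0}_4 = o(P^{1/2})$ does not appear to be required for the concentration step, since the summands $Y_i^2 I(|Y_i|\leq\lambda)$ are pointwise bounded by $\lambda^2$; it should rather be read as a background regularity condition ensuring that $P^{-1}\DOF{\HT}(x_0,\lambda)$ and $P^{-1}\EE_W \norm{\HT(Y,\lambda)-x_0}^2$ have meaningful limits, so that consistency at the $P^{-1}$ normalization is non-vacuous.
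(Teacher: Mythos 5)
Your proof is correct, and it takes a cleaner route than the paper's. The paper works directly with the mean and variance of $\tfrac{1}{P}\mathrm{SCORE}$: it establishes asymptotic unbiasedness from statement 1 of Theorem~\ref{thm:score}, then bounds the variance via Cauchy--Schwarz, controlling $\VV_{W_i}\left[|Y_i|^2 I(|Y_i|<\lambda)\right]$ by the full fourth moment $\EE_{W_i}|Y_i|^4 = (x_0)_i^4+6\sigma^2 (x_0)_i^2+3\sigma^4$ --- and this is exactly where the hypothesis $\norm{x_0}_4=o(P^{1/2})$ is consumed. Your decomposition into the oracle quantity $T$ (exactly unbiased for the risk by the covariance definition of the DOF, no differentiability needed --- the paper uses this same fact implicitly when passing from \eqref{eq:erisk} to the risk), the DOF error $2\sigma^2\pa{\EDOF{\HT}-\DOF{\HT}}$ handled by Theorem~\ref{thm:score}, and the centered sum $\sum_i \pa{Y_i^2 I(|Y_i|\leq\lambda) - \EE_{W_i}[Y_i^2 I(|Y_i|\leq\lambda)]}$ is equivalent in spirit but exploits the observation the paper misses: the indicator truncates each summand to $[0,\lambda^2]$, so its variance is at most $\lambda^4$ uniformly in $x_0$, and Chebyshev applies with no moment condition. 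Your approach therefore shows the hypothesis $\norm{x_0}_4 = o(P^{1/2})$ is not needed for the stated convergence in probability; it is an artifact of the paper's looser bound $\EE\left[|Y_i|^4 I(|Y_i|<\lambda)\right] \leq \EE |Y_i|^4$ (your closing remark that it might still be wanted to make the $P^{-1}$-normalized limits finite is a fair caveat, but the corollary as stated only concerns the difference, which your argument controls unconditionally).
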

}{
An immediate corollary of Theorem \ref{thm:score},
also given in \cite{deledalle2013score}, is that
\eqref{eq:edof} and~\eqref{eq:erisk} provide together
the Stein COnsistent Risk Estimator (SCORE) which is biased but consistent.
}%
Fig.~\ref{fig:algo} summarizes the pseudo-code when applying SCORE to automatically find
the optimal threshold $\lambda$ that minimizes SCORE in a predefined (non-empty) range.

\begin{figure}
\centering
\begin{minipage}{\linewidth}
\rule{\linewidth}{2px}
\begin{algorithmic}[0]
  \vspace{-0.1cm}
  \State \textbf{Algorithm} Risk estimation for Hard Thresholding
\end{algorithmic}
\vspace{-0.2cm}
\rule{\linewidth}{1px}
\begin{algorithmic}[0]
  \vspace{-0.1cm}
  \State \makebox[3cm][l]{\textbf{Inputs:}} observation $y \in \RR^P$, threshold $\lambda > 0$
  \State \makebox[3cm][l]{\textbf{Parameters:}} noise variance $\sigma^2 > 0$
  \State \makebox[3cm][l]{\textbf{Output:}} solution $x^\star$\\\vspace{-0.3cm}
  \State Initialize $h \leftarrow \widehat{h}(P)$
  \ForAll {$\lambda$ in the tested range}
  \State Compute $x \leftarrow \HT(y,\lambda)$ using \eqref{eq:ht}
  \State Compute $\EDOF{\HT}(y, \lambda, h)$ using \eqref{eq:edof}
  \State Compute $\mathrm{SCORE}$ at $y$ using \eqref{eq:erisk}
  \EndFor
  \State \Return $x^\star \leftarrow x$ that provides the smallest $\mathrm{SCORE}$
\end{algorithmic}
\vspace{-0.3cm}
\rule{\linewidth}{1px}
\end{minipage}
\caption{Pseudo-algorithm for HT
  with SCORE-based threshold optimization.}
  \label{fig:algo}
\end{figure}
\begin{figure}
  \centering
  \longversion{
    \includegraphics[width=0.5\linewidth]{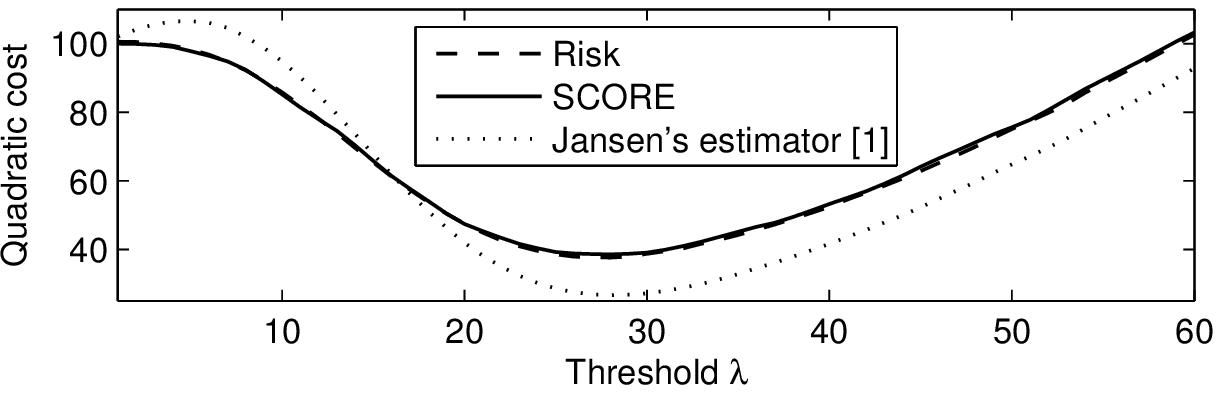}%
  }{
    \includegraphics[width=1\linewidth]{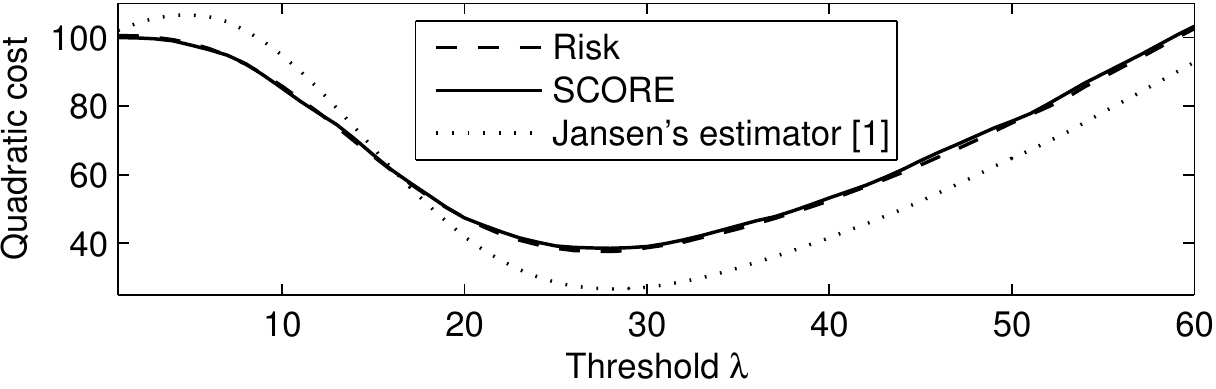}%
  }
  \longversion{}{\vspace{-0.6cm}}
  \caption{Risk and its SCORE estimate with respect to the threshold $\lambda$.}
  \label{fig:score}
  \vspace{-0.6cm}
\end{figure}

\section{Experiments and conclusions}

Fig.~\ref{fig:score} shows the evolution of the true
risk, the SCORE and the risk estimator of \cite{jansen2011icv} as a function of $\lambda$
where $x_0$ is a compressible vector of length $P = 2\text{E}5$ whose sorted values in magnitude decay as $|x_0|_{(i)} = 1/i^\gamma$ for $\gamma > 0$,
and we have chosen $\sigma$ such that the
SNR of $y$ is of about $5.65$dB and
$\widehat{h}(P) = 6\sigma/P^{1/3} \approx \sigma/10$.
The optimal $\lambda$ is found around the minimum of the true risk.

Future work will concern a deeper investigation of the choice of $\widehat{h}(P)$,
comparison with other biased risk estimators,
and extensions to other non-continuous estimators and inverse problems.
%, typically piece-wise smooth ones.

\longversion{
\section{Proof}\label{sec:proof}

We first derive a closed-form expression for the DOF of HT.

\begin{lem}\label{lem:HTdof}
  Let $Y = x_0 + W$ where $W \sim \Nn(x_0, \sigma^2\Id_P)$.
  The DOF of HT is given by
  \begin{align}
    \label{eq:dof}
    &\DOF{\HT}(x_0,\lambda) =
    \left(P -
    \frac{1}{2}
    \sum_{i = 1}^P
    \left[
      \erf \pa{\frac{(x_0)_i + \lambda}{\sqrt{2} \sigma}}
      -
      \erf \pa{\frac{(x_0)_i - \lambda}{\sqrt{2} \sigma}}
      \right]
    \right)
    +
    \frac{\lambda}{\sqrt{2 \pi} \sigma}
    \sum_{i = 1}^P
    \left[
      \exp \pa{-\frac{((x_0)_i \!+\! \lambda)^2}{2 \sigma^2}}
      \!+\!
      \exp \pa{-\frac{((x_0)_i \!-\! \lambda)^2}{2 \sigma^2}}
      \right].
  \end{align}
\end{lem}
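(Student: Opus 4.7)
My plan is to exploit the coordinatewise separability of $\HT$ and compute each per-coordinate covariance via integration by parts on the two smooth pieces where $\HT(\cdot,\lambda)$ reduces to the identity. Denoting $\mu_i = (x_0)_i$ and letting $g_i$ be the $\Nn(\mu_i,\sigma^2)$ density, separability gives $\DOF{\HT}(x_0,\lambda)=\sigma^{-2}\sum_{i=1}^P \cov(Y_i,\HT(Y_i,\lambda))$. Using the Gaussian score identity $(y-\mu_i)\,g_i(y)=-\sigma^2\,g_i'(y)$, each covariance rewrites as
\[
\cov(Y_i,\HT(Y_i,\lambda)) = \EE[(Y_i-\mu_i)\,\HT(Y_i,\lambda)] = -\sigma^2 \int_{\RR} \HT(y,\lambda)\, g_i'(y)\,dy.
\]

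Next I split this integral at the jump points $\pm\lambda$ of $\HT(\cdot,\lambda)$, on whose complement $\HT(y,\lambda)=y$, and integrate by parts on each of $(-\infty,-\lambda]$ and $[\lambda,+\infty)$:
\[
\int_{\lambda}^{+\infty}\! y\,g_i'(y)\,dy \;=\; -\lambda\,g_i(\lambda) - \!\int_{\lambda}^{+\infty}\! g_i,\qquad \int_{-\infty}^{-\lambda}\! y\,g_i'(y)\,dy \;=\; -\lambda\,g_i(-\lambda) - \!\int_{-\infty}^{-\lambda}\! g_i.
\]
The boundary terms at $\pm\infty$ vanish by Gaussian decay, while those at $\pm\lambda$, once $g_i$ is written out and the identities $(\lambda-\mu_i)^2=(\mu_i-\lambda)^2$ and $(-\lambda-\mu_i)^2=(\mu_i+\lambda)^2$ are applied, produce exactly the two exponential contributions in the lemma.

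Finally, the two remaining tail integrals $\int_{-\infty}^{-\lambda} g_i + \int_{\lambda}^{+\infty} g_i$ are converted to $1 - \tfrac{1}{2}\bigl[\erf\pa{(\mu_i+\lambda)/(\sqrt{2}\sigma)} - \erf\pa{(\mu_i-\lambda)/(\sqrt{2}\sigma)}\bigr]$ via the standard Gaussian c.d.f.--$\erf$ conversion and the oddness of $\erf$. Multiplying by $-\sigma^2$, dividing by $\sigma^2$, and summing over $i$ reproduces the displayed closed form. There is no genuine obstacle in this computation; the only delicate point is the sign bookkeeping across the two pieces so that the $\erf$ arguments and exponent squares assemble into the symmetric expression of the statement.
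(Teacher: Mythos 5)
Your proof is correct, and it takes a more self-contained route than the paper. The paper's own proof starts from a formula quoted from Jansen's report, $\DOF{\HT}(x_0,\lambda) = \EE_W [ \#\{ |Y| > \lambda \} ] + \tfrac{\lambda}{\sigma^2} \EE_W [ \sum_{i} \sign(Y_i) W_i I(|Y_i|>\lambda) ]$ — which implicitly rests on the decomposition $\HT = \ST + D$ used elsewhere in the paper, with Stein's lemma applied to the Lipschitz part $\ST$ and a direct covariance computation for the jump part $D$ — and then evaluates the resulting Gaussian expectations. You instead bypass the decomposition and the external citation entirely: you write the whole per-coordinate covariance as $-\sigma^2\int \HT(y,\lambda)\,g_i'(y)\,dy$ via the Gaussian score identity and integrate by parts on the two half-lines where $\HT(\cdot,\lambda)$ equals the identity. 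The boundary terms at $\pm\lambda$ produce the two Gaussian density evaluations (the exponential sum in the lemma) and the surviving tail integrals give the $\erf$ term, exactly as you describe; I verified the sign bookkeeping and the final assembly, and they match \eqref{eq:dof}. What your approach buys is a fully elementary, citation-free derivation that also makes transparent where the "extra" exponential term (the part Stein's lemma misses) comes from — it is precisely the boundary contribution at the discontinuities of $\HT$; what the paper's route buys is brevity and a direct link to the $\ST + D$ splitting it reuses in the construction of the estimator \eqref{eq:edof}.
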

\begin{proof}
  According to \cite{jansen2011icv}, we have
  \begin{align*}
    \DOF{\HT}(x_0,\lambda) = \EE_W [ \#\{ |Y| > \lambda \} ] + \lambda/\sigma^2 \EE_W \left[ \sum_{i=1}^P \sign(Y_i) W_i I(|Y_i| \!>\! \lambda) \right]
  \end{align*}
  where
  $\sign(.)$ is the sign function and $I(\omega)$ is the indicator for an event $\omega$.
  Integrating w.r.t. to the zero-mean Gaussian density of variance $\sigma^2$ yields the closed form of the expectation terms.
\end{proof}

We now turn to the proof of our theorem.
\begin{proof}
The first part of \eqref{eq:dof} corresponds to
$\EE_W[\#\{ |Y| > \lambda \}]$, and can then be obviously unbiasedly
estimated from an observation $y$ by $\#\{ |y| > \lambda \}$.
Let $A$ be the function defined, for $(t,a) \in \RR^2$, by
\begin{align*}
  A(t, a) = \frac{\sqrt{\sigma^2 + h^2}}{h} \exp\pa{-\frac{(t-a)^2}{2 h^2}} ~.
\end{align*}
By classical convolution properties of Gaussians, we have
\begin{align*}
\EE_{W_i} [ A(Y_i, a) ] &= \exp\pa{-\frac{((x_0)_i-a)^2}{2 (\sigma^2 + h^2)}}, \qandq\\
\VV_{W_i} [ A(Y_i, a) ] &=
  \frac{\sigma^2 \!+\! h^2}{h \sqrt{2 \sigma^2 \!+\!h^2}}
  \exp \pa{\!\!-\frac{((x_0)_i\!-\!a)^2}{2\sigma^2+h^2}}
  \!-\! \exp\pa{\!\!-\frac{((x_0)_i\!-\!a)^2}{\sigma^2 \!+\!h^2}}.
\end{align*}
Taking $h = \widehat{h}(P)$ and assuming
$\lim_{P\to \infty} \widehat{h}(P) = 0$
shows that
\begin{align*}
&\lim_{P \to \infty}
\EE_W \left[ \frac{1}{P} \sum_{i=1}^P A(Y_i, a) \right] =
\lim_{P\to \infty} \frac{1}{P} \sum_{i=1}^P \EE_W \left[ A(Y_i, a) \right] =
\lim_{P\to \infty} \frac{1}{P} \sum_{i=1}^P \exp\pa{-\frac{((x_0)_i\!-\!a)^2}{2 \sigma^2}} ~.
\end{align*}
Since from \eqref{eq:edof}, we have
\[
\EDOF{\HT}(Y, \lambda, h) = \#\{ |Y| > \lambda \} + \frac{\lambda}{\sqrt{2 \pi} \sigma}
  \sum_{i = 1}^P
  \left[A(Y_i,\lambda) + A(Y_i,-\lambda)\right]
\]
and using Lemma~\ref{lem:HTdof}, statement 1. follows.

For statement 2., the Cauchy-Schwartz inequality implies that
\[
\VV_W \left[ \frac{1}{P} \EDOF{\HT}(Y, \lambda, h)\right]^{1/2} \leq \frac{\VV_W \left[\#\{ |Y| > \lambda \}\right]^{1/2}}{P} + \frac{\lambda^2}{2 \pi P} \sum_{i = 1}^P
  \left[\VV_W \left[A(Y_i,\lambda)\right]^{1/2} + \VV_W \left[A(Y_i,-\lambda)\right]^{1/2}\right] ~.
\]
$\#\{ |Y_i| > \lambda \} \sim_{\mathrm{iid}} \mathrm{Bin}(P,1-p)$ whose variance is $Pp(1-p)$, where $p=\frac{1}{2}\pa{\erf \pa{\frac{(x_0)_i + \lambda}{\sqrt{2} \sigma}} -\erf \pa{\frac{(x_0)_i - \lambda}{\sqrt{2} \sigma}}}$. It follows that 
\[
\lim_{P \to \infty }\VV_W \left[\frac{1}{P}\#\{ |Y| > \lambda \}\right] = 0 ~.
\]
Taking again $h = \widehat{h}(P)$ with
$\lim_{P\to \infty} \widehat{h}(P) = 0$
and $\lim_{P\to \infty} P^{-1}\widehat{h}(P)^{-1} = 0$, yields
\begin{align*}
&\lim_{P \to \infty}
\VV_W \left[ \frac{1}{P} \sum_{i=1}^P A(Y_i, a) \right] =
\lim_{P\to \infty} \frac{1}{P^2} \sum_{i=1}^P \VV_W \left[ A(Y_i, a) \right] =
0 ~,
\end{align*}
where we used the fact that the random variables $Y_i$ are uncorrelated. This establishes 2.. Consistency (i.e. convergence in probability) follows from traditional arguments by invoking Chebyshev inequality and using asymptotic unbiasedness and vanishing variance established in 1. and 2..
\end{proof}

Let us now prove the corollary.
\begin{proof}
By assumption, $\lim_{P\to \infty} \widehat{h}(P) = 0$. Thus by by virtue of statement 1. of Theorem~\ref{thm:score} and specializing \eqref{eq:erisk} to the case of HT gives
\begin{align*}
  \lim_{P \to \infty}
  \EE_W \left[ \frac{1}{P} \mathrm{SCORE}\{\HT\}(y, \lambda, \widehat{h}(P)) \right]
  &= \lim_{P \to \infty}
  \frac{1}{P} \EE_W \left[ \norm{Y - \HT(Y, \lambda))}^2
  \!-\! P \sigma^2
  \!+\! 2 \sigma^2 \EDOF{\HT}(Y, \lambda, \widehat{h}(P)) \right] \\
  &= \lim_{P \to \infty}
  \frac{1}{P} \EE_W \norm{Y - \HT(Y, \lambda))}^2
  \!-\!  \sigma^2
  \!+\! 2 \sigma^2 \lim_{P \to \infty}\frac{1}{P} \EE_W \EDOF{\HT}(Y, \lambda, \widehat{h}(P)) \\
  & = \lim_{P \to \infty}
  \frac{1}{P} \EE_W \norm{Y - \HT(Y, \lambda))}^2
  \!-\!  \sigma^2
  \!+\! 2 \sigma^2 \lim_{P \to \infty}\frac{1}{P} \DOF{\HT}(x_0,\lambda) \\
  & = \lim_{P\to \infty}
  \frac{1}{P} \EE_W \| \HT(y, \lambda) - x_0 \|^2
\end{align*}
where we used the fact that all the limits of the expectations are finite.
The Cauchy-Schwartz inequality again yields
\begin{align*}
  \VV_W \left[ \frac{1}{P} \mathrm{SCORE}\{\HT\}(Y, \lambda, \widehat{h}(P)) \right]^{1/2}
  &\leq
    \VV_W \left[ \frac{1}{P} \pa{\| Y - \HT(Y, \lambda) \|^2} \right]^{1/2}
+
    2\sigma^2\VV_W \left[ \frac{1}{P} \EDOF{\HT}(Y, \lambda, \widehat{h}(P) ) \right]^{1/2} \\
  &=\frac{1}{P}\pa{\sum_{i=1}^P \VV_{W_i} \left[|Y_i|^2 I(|Y_i| < \lambda) \right]}^{1/2}
+
    2\sigma^2\VV_W \left[ \frac{1}{P} \EDOF{\HT}(Y, \lambda, \widehat{h}(P) ) \right]^{1/2} \\
  &\leq\frac{1}{P}\pa{\sum_{i=1}^P \EE_{W_i} |Y_i|^4 }^{1/2}
+
    2\sigma^2\VV_W \left[ \frac{1}{P} \EDOF{\HT}(Y, \lambda, \widehat{h}(P) ) \right]^{1/2} \\
  &= \pa{\frac{\norm{x_0}_4^4}{P^2} + 6\sigma^2\frac{\norm{x_0}^2}{P^2} + \frac{3\sigma^4}{P}}^{1/2}
+
    2\sigma^2\VV_W \left[ \frac{1}{P} \EDOF{\HT}(Y, \lambda, \widehat{h}(P) ) \right]^{1/2} \\
  &\leq  \pa{\pa{\frac{\norm{x_0}_4^2}{P}}^2 + 6\sigma^2\frac{\norm{x_0}_4^2}{P} + \frac{3\sigma^4}{P}}^{1/2}
+
    2\sigma^2\VV_W \left[ \frac{1}{P} \EDOF{\HT}(Y, \lambda, \widehat{h}(P) ) \right]^{1/2} ~.
\end{align*}
As by assumption, $\lim_{P\to \infty} P^{-1} \widehat{h}(P)^{-1} = 0$ and $\norm{x_0}_4=o\pa{P^{1/2}}$, the variance of SCORE vanishes as $P \to \infty$. We conclude using the same convergence in probability arguments used at the end of the proof of Theorem~\ref{thm:score}.  
%\begin{align*}
%  \frac{1}{P}
%  \left[ \| y - \HT(y, \lambda) \|^2 - P \sigma^2 \right]
%  =
%  \frac{1}{P}
%  \sum_{i = 1}^P \left[ y_i^2 (1 - I(|y_i| > \lambda)) - \sigma^2 \right]
%\end{align*}
%converges to $0$ when $P$ converges to $+\infty$.
%Hence, assuming $\lim_{P\to \infty} P^{-1} \widehat{h}(P)^{-1} = 0$ and using Cauchy-Schwartz inequality
%\begin{align*}
%  \VV_W \left[ \frac{1}{P} \mathrm{SCORE}\{\HT\}(Y, \lambda, \widehat{h}(P)) \right]
%  \leq
%  \left(
%  \sqrt{
%    \VV_W \left[ \frac{1}{P} \left( \| Y - x(Y, \lambda) \|^2 - P \sigma^2 \right) \right]
%    }
%+
%  \sqrt{
%    \VV_W \left[ \frac{1}{P} \EDOF{\HT}(Y, \lambda, \widehat{h}(P) ) \right]
%    }
%  \right)^2
%\end{align*}
%concludes the proof.
\end{proof}
}{}

\bibliographystyle{IEEEtran}

% that's all folks
\end{document}